\documentclass[reqno]{amsart}

\usepackage{amsmath}
\usepackage{amssymb}
\usepackage[linktocpage=true]{hyperref}
\hypersetup{
    colorlinks=true,
    linkcolor=blue,
    filecolor=magenta,      
    urlcolor=cyan,
    pdftitle={Overleaf Example},
    pdfpagemode=FullScreen,
    }
\usepackage[hypcap=true]{caption}
\usepackage{enumitem}

\theoremstyle{plain}
\newtheorem{theorem}{Theorem}[section]
\newtheorem{lemma}[theorem]{Lemma}
\newtheorem{proposition}[theorem]{Proposition}

\theoremstyle{definition}

\newtheorem{remark}[theorem]{Remark}

\theoremstyle{plain}
\newtheorem{ltheorem}{Theorem}

\newcommand{\extR}{\widehat{\mathbb{R}}}
\newcommand{\extC}{\widehat{\mathbb
{C}}}
\newcommand{\PSL}{\mathrm{PSL}}
\renewcommand{\Re}{\mathrm{Re}}
\renewcommand{\Im}{\mathrm{Im}}
\newcommand{\SL}{\mathrm{SL}}
\newcommand{\PGL}{\mathrm{PGL}}
\newcommand{\PSU}{\mathrm{PSU}}
\renewcommand{\L}{\mathcal{L}}
\renewcommand{\O}{\mathcal{O}}
\newcommand{\nBianchi}{\Gamma_n}
\newcommand\restr[2]{{
  \left.\kern-\nulldelimiterspace 
  #1 
  \vphantom{\big|} 
  \right|_{#2} 
  }}

\makeatletter
\newcommand*{\rom}[1]{\expandafter\@slowromancap\romannumeral #1@}
\makeatother

\begin{document}
\title{A note on
  Hilbert transform over lattices of \texorpdfstring{$\mathrm{PSL}_2(\mathbb{C})$}{PSL2C}
}

\author{
  Jorge Pérez García
  }
\address{Instituto de Ciencias Matemáticas (ICMAT), C. Nicolás Cabrera, 13-15, 13-15, \linebreak Fuencarral-El Pardo, 28049 Madrid, Spain}
\email{jorge.perez@icmat.es}

\begin{abstract}
  Gonz\'alez-P\'erez, Parcet and Xia introduced recently a framework to study $L_p$-boundedness of certain families of idempotent multipliers on von Neumann algebras. It includes symbols $m\colon \mathrm{PSL}_2(\mathbb{C})\to \mathbb{R}$ arising from lifting the indicator function of a partition $\{\Sigma^+,\Sigma^+,\Sigma^-\}$ of the hyperbolic space $\mathbb{H}^3$ to its isometry group $\mathrm{PSL}_2(\mathbb{C})$. The boundedness of $T_m$ on $L_p(\mathcal{L} \mathrm{PSL}_2(\mathbb{C}))$ was disproved by Parcet, de la Salle and Tablate. Nevertheless, we will show that this Fourier multiplier is bounded when restricted to the arithmetic lattices $\mathrm{PSL}_2(\mathbb{Z}[\sqrt{-n}])$, solving a question left open by the first named authors.
\end{abstract}

\keywords{Group von Neumann algebras, Hilbert transform, Non-commutative Lp spaces, Non-commutative harmonic analysis}

\subjclass{43A22, 46L52}

\maketitle

\section*{Introduction}

The boundedness problem for Fourier multipliers on $L_p$-spaces has always played a central role in harmonic analysis. One of the most studied examples is the Hilbert transform, defined as $\widehat{Hf}(\xi) = i \,\mathrm{sign}(\xi) \widehat{f}(\xi)$ for $f\in L_2(\mathbb{R})$. Although $H$ was already known to be bounded in $L_p(\mathbb{R})$ for $1<p<\infty$, in 1955 Cotlar \cite{Cotlar1955Unified} gave a very simple proof of this fact using the following identity:
\begin{equation}\tag{Classical Cotlar}\label{eq:classical Cotlar}
     |Hf|^2=2 H(f \cdot Hf)-H(H(|f|^2)).
\end{equation}
This is known nowadays as the Cotlar identity. His proof uses that $H$ is bounded in $L_2(\mathbb{R})$ and that, by a recursive use of \eqref{eq:classical Cotlar}, it also must be bounded in every $p=2^k$ for $k\geq 1$. Interpolation and the fact that $H$ is self-adjoint complete the proof.

Mei and Ricard \cite{MeiRicard2017FreeHilb} introduced the Cotlar identity in the non-commutative setting in order to study Hilbert transforms over free groups and amalgamated free products of von Neumann algebras. In the recent work of Gonz\'alez-P\'erez, Parcet and Xia \cite{GonParXia2022} the authors developed a systematic approach to study Cotlar identities for Fourier multipliers in non-Abelian groups. 
Let $G$ be an unimodular group, $\L G$ the von Neumann algebra of $G$ and $G_0\subset G$ an open subgroup. 
Consider $m\colon G\to \mathbb{C}$ a symbol on $G$ and $T_m$ the corresponding Fourier multiplier on $\L G$. Then the formula:
\begin{equation}\tag{Cotlar}\label{eq:Cotlar for symbol}
    (m(g^{-1}) - m(h))(m(gh) - m(g)) = 0, \quad \text{for all } g\in G\setminus G_0,\,h\in G,
\end{equation}
is a translation of \eqref{eq:classical Cotlar} for $T_m$ in terms of its symbol. The main result in \cite{GonParXia2022} states that any $m$ which is bounded, left $G_0$-invariant and verifies \eqref{eq:Cotlar for symbol} defines a bounded multiplier in $L_p(\L\,G)$ for all $1<p<\infty$.

The subgroup $G_0$ represents a set in which the Cotlar identity may fail. In the argument, this failure is balanced by the invariance of $m$ with respect to $G_0$. Therefore this formulation of the theorem allows more flexibility in terms of the multiplier than the original one. However, the hypothesis of invariance can be relaxed even further. If $\chi\colon G_0\to \mathbb{T}^1$ is a character, it is enough for the result to hold that $m$ verifies:
\begin{equation*}
    m(gh) = \chi(g)m(h)\quad \text{for all }g\in G_0, \, h\in G.
\end{equation*}
 We say in this case that $m$ is \textit{left $(G_0,\chi)$-equivariant,} and of course the $G_0$-invariance is recovered when $\chi$ is the trivial character.

\subsection*{Hilbert transform in \texorpdfstring{$\PSL_2(\mathbb{C})$}{PSL2(C)}}
Recall that $\PSL_2(\mathbb{C})$, which is the quotient of the $2\times 2$ complex matrices with determinant $1$ by its center, can be identified with the group of orientation-preserving isometries of the three dimensional hyperbolic space $\mathbb{H}^3$. This identification can be made explicit in various ways. Here we give one using the upper-space model of $\mathbb{H}^3$ and quaternions. Let $i,j,k$ denote the usual three quaternionic units, and let's define:
\begin{equation*}
    \mathbb{H}^3 =
    \{x+yi+rj \colon x,y,r\in \mathbb{R},\, r>0\}.
\end{equation*}
Doing so, $\mathbb{H}^3$ is exactly the subspace $\mathbb{C}+\mathbb{R}_{>0}j$ of the quaternions. 
Now, for a given $\omega \in \mathbb{H}^3$ we set:
\begin{equation*}
    g\cdot \omega = 
    (a\omega + b)(c\omega+d)^{-1},
    \quad \text{for }
    g =
    \begin{bmatrix}
        a & b \\
        c & d
    \end{bmatrix}
    \in \PSL_2(\mathbb{C}).
\end{equation*}
It is possible to compute the inverse of a quaternion using its conjugate and modulus. This leads to a more explicit formula for the action of $g\in\PSL_2(\mathbb{C})$ on the element $\omega = z+rj\in \mathbb{C}+\mathbb{R}_{>0}j$, namely:
$$ g\cdot \omega = \frac{a\overline{c}|z+rj|^2 + b\overline{d} + a\overline{d}z + b\overline{c}\overline{z} + rj}{|c(z+rj)+ d|^2}
$$
This is a well-defined action of $\PSL_2(\mathbb{C})$ on $\mathbb{H}^3$. Indeed, $\PSL_2(\mathbb{C})$ acts by orientation-preserving isometries on $\mathbb{H}^3$ when equipped with the usual Riemannian metric:
$$ ds^2 = \frac{dx^2 + dy^2 + dr^2}{r^2},
$$
and it is the full group of such isometries (see \cite{Ahl1985} for more details).

On the other hand, a group $G$ acting on a set $X$ induces a multiplier on $G$ as follows: first choose a point $x_0\in X$ and two disjoint subsets $X^+, X^- \subset X$. Let $m$ be the map $m\colon G\to \mathbb{C}$ defined for each $g\in G$ as:
\begin{equation*}
m(g)=
\begin{cases}
    1 & \text{if } g\cdot x_0\in X^+,\\
    -1 & \text{if } g\cdot x_0\in X^-,\\
    0 & \text{otherwise}.    
\end{cases}
\end{equation*}

Even if the final multiplier depends on $x_0$ and also on the partition given by $X^+$ and $X^-$, the boundedness of the multiplier is preserved by changing $x_0$ for any other point in the same $G$-orbit or using the sets $\{g\cdot X^+, g\cdot X^-\}$, with $g\in G$, instead of $\{X^+, X^-\}$.
Back to the action of $\PSL_2(\mathbb{C})$ on the hyperbolic space, we are choosing the base point in $\mathbb{H}^3$ given by $j$ in our quaternionic parametrization, and the following partition:
\begin{align*}
    \Sigma^+ = \{\omega \in \mathbb{H}^3\colon \Re(\omega)>0 \},
    \quad \text{and}\quad
    \Sigma^- = \{\omega \in \mathbb{H}^3\colon \Re(\omega)<0 \}.
\end{align*}
This procedure induces a symbol $m$ in $\PSL_2(\mathbb{C})$ that is explicitly given by:
\begin{equation}\tag{1}\label{eq:def of m}
    m(g) = \mathrm{sign}( \Re(a\overline{c} + b\overline{d})),\quad \text{with }
    g = \begin{bmatrix}
        a & b \\
        c & d
    \end{bmatrix} \in \PSL_2(\mathbb{C}).
\end{equation}
The dividing frontier $\Sigma = \mathbb{H}^3 \smallsetminus (\Sigma^+\cup \Sigma^-)= \{\omega \in \mathbb{H}^3\colon \Re(\omega)=0 \}$ is a hyperbolic plane, which determines the symbol $m$ up to a sign. Since the action of $\PSL_2(\mathbb{C})$ is transitive both on points and hyperbolic planes in $\mathbb{H}^3$, the boundedness of the multiplier defined by $m$ on $L_p(\L\, \PSL_2(\mathbb{C}))$ will remain the same under any other choice of the kind. Also, it is worth noticing that $m$ is easily shown  invariant under the action of two groups:
\begin{enumerate}[label=\roman*.]
    \item The right action of the group $\PSU(2)$, which is the image of the unitary group $\mathrm{U}(2)$ under the projection $\SL_2(\mathbb{C})\to \PSL_2(\mathbb{C})$.
    \item The left action of the group $G_0\leq \PSL_2(\mathbb{C})$ defined by:
    \begin{equation}\tag{2}\label{eq: def of G0}
        G_0 = \left\{ \begin{bmatrix}
            x & iy \\
            iz & w
        \end{bmatrix} \colon x,y,z,w\in \mathbb{R},\, xw+yz = 1
        \right\}.
    \end{equation}
\end{enumerate}

In \cite{GonParXia2022} the authors proved that, when restricted to the lattices $\PSL_2(\mathbb{Z})$ and $\PSL_2(\mathbb{Z}[i])$, this function defines an $L_p$-bounded Fourier multiplier for every $1<p<\infty$. They posed three related questions, namely:
\begin{enumerate}[label=\roman*.]
    \item Is this multiplier bounded in $L_p(\L\,\PSL_2(\mathbb{C}))$?
    \item Is its restriction bounded in $L_p(\L\,\PSL_2(\mathbb{R}))$?
    \item Are there more lattices $\Gamma\leq\PSL_2(\mathbb{C})$ for which the restriction of $m$ still defines a multiplier bounded in $L_p(\L\,\Gamma)$?
\end{enumerate}
The two first questions are negatively answered by the work of Parcet, de la Salle and Tablate. Concretely, by \cite[Corollary B2]{PardlSTab2023} and the fact that the Lie algebra of $\PSL_2(\mathbb{C})$ is simple (as a real Lie algebra) solves the problem.

In the present work we tackle the third question. Our main result concerns the family of groups $\nBianchi = \PSL_2(\mathbb{Z}[\sqrt{-n}])$, and it can be stated follows:
\begin{ltheorem}\label{th:main theorem}
    For any integer $n>0$, the symbol $m$ restricted to the group $\nBianchi$ defines a bounded Fourier multiplier in $L_p(\L\,\nBianchi)$ for all $1<p<\infty$, whose norm satisfies:
    $$\|T_m\colon L_p(\L\,\nBianchi)\to L_p(\L\,\nBianchi)\|\lesssim\left(\frac{p^2}{p-1}\right)^\beta,\quad \text{where }\beta = 1+\log_2(1+\sqrt{2}).$$
\end{ltheorem}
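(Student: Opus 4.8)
The plan is to apply the criterion of González-Pérez, Parcet and Xia in its $(G_0,\chi)$-invariant form: it suffices to produce an open subgroup $G_0\le\nBianchi$ and a character $\chi\colon G_0\to\mathbb{T}^1$ so that $m|_{\nBianchi}$ is bounded (immediate, as $|m|\le1$), left $(G_0,\chi)$-invariant, and satisfies $(m(g^{-1})-m(h))(m(gh)-m(g))=0$ for all $g\in\nBianchi\smallsetminus G_0$ and $h\in\nBianchi$. Since $\nBianchi$ is discrete every subgroup is open, so the real content is the choice of $G_0$ and the Cotlar identity. Let $P=\{\omega\in\mathbb{H}^3:\Re(\omega)=0\}$ be the dividing hyperbolic plane and take $G_0=\{g\in\nBianchi:g\cdot P=P\}$, the stabiliser of $P$; in coordinates $G_0=\{g\in\nBianchi:\Re(a\overline c)=\Re(b\overline d)=0\}$ (these two scalar equations already force $\Im(a\overline d-b\overline c)=0$, via the identity below), and $G_0$ is a discrete subgroup of $\PGL_2(\mathbb{R})$. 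As $G_0$ preserves $P$ it either preserves or interchanges the two half-spaces, and setting $\chi(g)=\pm1$ accordingly gives a character. Writing $m(g)=f(g\cdot j)$ with $f=\mathbf{1}_{\mathbb{H}^3_+}-\mathbf{1}_{\mathbb{H}^3_-}$, one has $f(g_0\cdot\omega)=\chi(g_0)f(\omega)$ for $g_0\in G_0$, hence $m(g_0h)=\chi(g_0)m(h)$: $m$ is left $(G_0,\chi)$-invariant.

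Fix now $g\in\nBianchi\smallsetminus G_0$, so $g\cdot P\ne P$. Letting $h$ run over $\nBianchi$ and using the action on the orbit $\nBianchi\cdot j$, the identity $(m(g^{-1})-m(h))(m(gh)-m(g))=0$ becomes the assertion that no point $z\in\nBianchi\cdot j$ satisfies both $f(z)\ne m(g^{-1})$ and $f(g\cdot z)\ne m(g)$. This holds as soon as $P$ and $g\cdot P$ are disjoint in $\mathbb{H}^3$: for two disjoint geodesic planes the side of one that does not contain the other lies inside a single open half-space bounded by the other, and applying this to $g\cdot P$ and to $g^{-1}\cdot P$ (both disjoint from $P$, since $g^{\pm1}\notin G_0$) one finds that any such $z$ must in fact lie on $P$; but if $z=\gamma\cdot j\in P$ then $g\cdot z\in g\cdot P\subset\mathbb{H}^3_{m(g)}$ (which is disjoint from $P$), so $f(g\cdot z)=m(g)$ and $z$ is not bad. (Disjointness also yields $m(g),m(g^{-1})\in\{\pm1\}$.) Finally, two distinct geodesic planes are disjoint in $\mathbb{H}^3$ unless they meet along a geodesic, which happens exactly when the circle $g(i\mathbb{R}\cup\{\infty\})$ meets $i\mathbb{R}\cup\{\infty\}$ transversally, i.e. when the real quadratic $Q_g(t)=\Re(a\overline c)\,t^{2}-\Im(a\overline d-b\overline c)\,t+\Re(b\overline d)$---whose sign equals that of $\Re\big(g\cdot(it)\big)$---changes sign, i.e. when $D(g):=\Im(a\overline d-b\overline c)^{2}-4\,\Re(a\overline c)\,\Re(b\overline d)>0$. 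Thus the whole theorem reduces to showing $D(g)\le0$ for every $g\in\nBianchi$.

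Write $a=p+\alpha i$, $b=q+\beta i$, $c=r+\gamma i$, $d=s+\delta i$ with $p,q,r,s\in\mathbb{Z}$ and $\alpha,\beta,\gamma,\delta\in\sqrt{n}\,\mathbb{Z}$. Using $\Im(a\overline d-b\overline c)=-2(p\delta-q\gamma)$ and $p\delta-q\gamma=\beta r-\alpha s$ (both consequences of $ad-bc=1$), a direct expansion yields
\[
  -\tfrac14\,D(g)=\big(ps+\beta\gamma\big)\big(qr+\alpha\delta\big),
  \qquad
  \big(ps+\beta\gamma\big)-\big(qr+\alpha\delta\big)=\Re(ad-bc)=1.
\]
For $g\in\nBianchi$ the two factors are integers---here the ring $\mathbb{Z}[\sqrt{-n}]$ enters, since $\alpha\delta,\beta\gamma\in n\mathbb{Z}$---hence consecutive integers, so their product is $\ge0$ and $D(g)\le0$, uniformly in $n$. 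This proves the Cotlar identity, and the boundedness of $T_m$ on $L_p(\L\,\nBianchi)$ for $1<p<\infty$ follows from the theorem of González-Pérez, Parcet and Xia. For the explicit norm one keeps track of constants in their argument: each step of the non-commutative Cotlar identity doubles the exponent at the cost of a factor $1+\sqrt2$---the positive root of $x^{2}=2x+1$ that appears on solving $\|T_mf\|_{2p}^{2}\le2\|T_m\|_{p}\|f\|_{2p}\|T_mf\|_{2p}+\|T_m\|_{p}^{2}\|f\|_{2p}^{2}$---giving $\|T_m\|_{p}\lesssim p^{\log_2(1+\sqrt2)}$ along $p=2^{k}$; interpolation and duality ($\|T_m\|_{p}=\|T_m\|_{p'}$) extend this to all $1<p<\infty$, and the treatment of the correction terms supported on $G_0$ (handled, as in their proof, by comparison with an auxiliary operator of $L_p$-norm $\lesssim p^{2}/(p-1)$) supplies the remaining power, for the exponent $\beta=1+\log_2(1+\sqrt2)$.

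The two genuinely new ingredients are the geometric reduction of the Cotlar identity---which needs careful bookkeeping on the sides of disjoint geodesic planes, together with the remark that orbit points landing on $P$ are automatically harmless---and the factorisation $-\tfrac14D(g)=(ps+\beta\gamma)(qr+\alpha\delta)$; once the latter is found, the arithmetic collapses to ``consecutive integers have non-negative product'', with no dependence on $n$. I expect the only remaining effort to be in making the bookkeeping of constants through the González-Pérez--Parcet--Xia argument fully rigorous.
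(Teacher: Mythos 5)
Your proof is correct in substance, but it reaches the Cotlar identity by a genuinely different route than the paper. The paper works entirely algebraically: it classifies $K_n=\{g:m(g)=0\}$ explicitly (its Theorem \ref{th: decomposition of kernel}, via Lemmas \ref{lemma: classification of the kernel (0)}--\ref{lemma: class of the kernel (1)}) in order to exhibit the subgroup structure and the character $\chi$ by hand, and then verifies $(m(g^{-1})-m(h))(m(gh)-m(g))=0$ by decomposing $h$ with the $\mathrm{ANK}$ Lemma \ref{lemma: ANK decomposition} and splitting $m(gh)m(g)$ into three terms whose signs are controlled by three separate lemmas (\ref{lemma: lower bound of scalar product column by column}, \ref{lemma: quadratic inequality}, \ref{lemma: relation m(g)m(gt)}). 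You instead take $G_0=\mathrm{Stab}_{\Gamma_n}(P)$, for which the subgroup property and the $(G_0,\chi)$-invariance of $m$ are automatic from the geometry, and you reduce the whole Cotlar identity to the disjointness of $P$ and $gP$ for $g\notin G_0$; the half-space bookkeeping you sketch does close up (indeed it shows there are no ``bad'' points $z$ anywhere in $\mathbb{H}^3$, not only on the orbit). Both arguments ultimately rest on the same arithmetic fact: your discriminant inequality $D(g)=\Im(a\overline{d}-b\overline{c})^2-4\Re(a\overline{c})\Re(b\overline{d})\le 0$ is exactly Lemma \ref{lemma: quadratic inequality}, and your factorisation $-\tfrac14 D(g)=(ps+\beta\gamma)(qr+\alpha\delta)$ with consecutive integer factors is the paper's $-4X(1+X)$ with $X=b_1c_1+na_2d_2$ (I checked the identity; it is correct). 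What your approach buys is economy: the sign lemmas \ref{lemma: lower bound of scalar product column by column} and \ref{lemma: relation m(g)m(gt)} and the classification of $K_n$ become unnecessary (the first is even a formal consequence of $D(g)\le 0$). What the paper's computational route buys is the explicit description of $K_n$ and $K_n'$, which is what lets it analyse why the argument breaks for the Bianchi groups $\Gamma_n'$.

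Two loose ends you should tidy. First, the dichotomy ``distinct geodesic planes are disjoint unless their boundary circles meet in two points, detected by a sign change of $Q_g$'' has degenerate cases (leading coefficient $\Re(a\overline{c})=0$, where one intersection point sits at $g\cdot\infty$, and tangency at infinity when $D(g)=0$); all of them are harmless once $D(g)\le 0$ is known, but they should be checked rather than absorbed into the word ``transversally''. Second, the final paragraph re-deriving the exponent $\beta=1+\log_2(1+\sqrt{2})$ is speculative and not needed: the norm bound is simply quoted from \cite[Theorem A]{GonParXia2022}, as in the paper, and your heuristic accounting of where the extra ``$1+$'' comes from should not be presented as part of the proof.
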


The proof consist in identifying a subgroup $K_n\leq \Gamma_n$ and a suitable character $\chi\colon K_n\to \mathbb{T}^1$ for which $m$ is left $(K_n,\chi)$-equivariant, and then proving by hand that \eqref{eq:Cotlar for symbol} holds.
Using results and ideas from \cite{Sta2017}, we refined the argument in \cite{GonParXia2022} for the case $n=1$, where the authors defined an auxiliary symbol $\widetilde m$ that is indeed $K_1$-invariant, and carried out the analogous computations that we present here in more generality. 

Bianchi groups are another natural family of lattices in $\PSL_2(\mathbb{C})$ to consider, which were introduced by Bianchi in \cite{bianchi1892sui} as a generalization of the group $\PSL_2(\mathbb{Z})$. For every square-free positive integer $n>0$, we define the $n$-th Bianchi group as $\Gamma_n' = \PSL_2(\O_{-n})$, where $\O_{-n}$ denotes the ring of integers of the quadratic extension $\mathbb{Q}(\sqrt{-n})$. The explicit definition of $\Gamma_n'$ depends on the class of $n$ modulus 4, since:
\begin{equation*}
    \O_{-n} =
\begin{cases}
    \mathbb{Z}[\sqrt{-n}] & \text{if } n\not\equiv -1 \, (\operatorname{mod} 4), \\
    \mathbb{Z}\left[\frac{1+\sqrt{-n}}{2}\right] & \text{otherwise}. 
\end{cases}
\end{equation*}
Therefore this family extends the one featuring in Theorem \ref{th:main theorem} when $n\equiv -1 \, (\operatorname{mod} 4)$. In this case, the problem is that the set where \eqref{eq:Cotlar for symbol} fails is bigger in $\Gamma_n'$ than in $\Gamma_n$. This set cannot be contained in a subgroup with respect to which $m$ has some kind of invariance, and this is why the Cotlar identity cannot hold in every Bianchi group $\Gamma_n'$ with $n\equiv -1 \, (\operatorname{mod} 4)$. 

The question of whether $m$ defines a bounded multiplier on $L_p(\L\,\Gamma_n')$ is left open in this case, but we are still able to prove that, choosing a different hyperbolic plane $\Sigma$ to induce our multiplier, one can still get symbols that satisfy Cotlar identity in most of Bianchi groups (all but $n=3$). Moreover, this approach also allows us to prove the Cotlar identity for the original $m$ in $\mathrm{PSL}_2(\mathbb{Z}[i])$ in a much simpler way than any of the previous proofs.

\section{Background}\label{sec: background}

\subsection*{Group von Neumann algebras}
Let $G$ be a discrete group and let $\lambda\colon G \to B(\ell_2(G))$ denote the left regular representation of $G$, that is, the unitary representation of $G$ assigning to every $g\in G$ the operator $\lambda_g\in B(\ell_2(G))$ given by $\lambda_gf(h) = f(g^{-1}h)$, for every $f\in \ell_2(G)$ and $h\in G$. The group von Neumann algebra of $G$, denoted here by $\L\,G$, is the operator algebra given by:
\begin{equation*}
    \L\,G = \overline{\textrm{span}\{\lambda_g \colon g\in G\}}^{\textrm{WOT}},
\end{equation*}
where closure is taken in the weak operator topology of $B(\ell_2(G))$.
Notice that an arbitrary element $x\in \L\,G$ can be represented by a sum $ x = \sum_{g\in G} x_g \lambda_g$, with $x_g\in \mathbb{C}$.

The group von Neumann algebra $\L\,G$ comes equipped with a finite trace:
\begin{equation*}
    \tau \colon \L\,G \to \mathbb{C}, \quad
    x\mapsto \tau \left(\sum_{g\in G} x_g \lambda_g\right) = x_e.
\end{equation*}
If $G$ is Abelian then $\L\,G$ is isomorphic (as von Neumann algebra) to $L_\infty(\widehat{G})$, where $\widehat{G}$ represents the dual group, and $\tau$ is the functional induced on $L_\infty(\widehat{G})$ by the Haar measure of $\widehat{G}$. 
In the non-commutative case, the trace $\tau$ above defined helps us to define $L_p$-spaces associated to $\L\,G$ without needing an underlying measure space. For a given $x\in \L\,G$ and $p\in [1,\infty]$ we define the norms:
\begin{equation*}
    \|x\|_p = \tau(|x|^p)^{1/p}\, \text{ if } 1\leq p <\infty, \text{ and } \,
    \|x\|_\infty = \|x\|_{\L\,G}.
\end{equation*}
The space $L_p(\L\,G)$ is defined as the completion of $B(\ell_2(G))$ with respect to this norm. All of this can be done in more generality for non-discrete groups, using the Haar measure of $G$ and defining a weight $\tau$ instead of a trace, see \cite{Ped1979}. The $L_p$-spaces over von Neumann algebras can also be defined in more generality, see for example \cite{PiXu2003}.

\subsection*{Non-commutative Fourier multipliers}
A Fourier multiplier $T_m$ with symbol $m\colon G\to \mathbb{C}$ is an operator defined as:
\begin{equation*}
    T_m\left(\sum_{g\in G} x_g\lambda_g\right) = \sum_{g \in G} m(g)x_g\lambda_g, \quad \text{for } x = \sum_{g\in G} x_g\lambda_g \in \mathbb{C}G.
\end{equation*}
Here $\mathbb{C}G$ denotes the space of elements with finite Fourier expansion. Notice that it is a dense subspace of every $L_p(\L\,G)$ for $1\leq p<\infty$. If $T_m$ extends to a bounded operator $T_m\colon L_p(\L\,G)\to L_p(\L\,G)$, we say that $T_m$ is a bounded $L_p$-multiplier.

The study of general conditions for the symbol $m$ that ensure the $L_p$-boundedness of $T_m$ has been an active area of research both in the classical and the non-commutative case. As discussed in the Introduction, the key result we are going to use concerns the following version of Cotlar identity for non-commutative Fourier multipliers:

\begin{theorem}\label{th: Cotlar theorem of Adri and Runlian}\cite[Theorem A]{GonParXia2022}
Let $G$ be a discrete group, $G_0 \leq G$ a subgroup and $\chi\colon G_0\to \mathbb{T}^1$ some character. Let $T_m$ be a Fourier multiplier whose symbol $m \colon G \to \mathbb{C}$
is bounded and left $(G_0,\chi)$-equivariant. If $m$ satisfies the identity:
\begin{equation*}
    (m(g^{-1}) - m(h)) (m(gh) - m(g)) = 0,
\text{ for all } g \in G \setminus G_0 \text{ and } h \in G
\end{equation*}
the $T_m$ is bounded in $L_p$ for all $1 < p < \infty$. Moreover, its norm satisfies:
\begin{equation*}
    \|T_m\colon L_p(\L\,G)\to L_p(\L\,G)\|\lesssim\left(\frac{p^2}{p-1}\right)^\beta,
    \quad \text{with } \beta = \log_2(1+\sqrt{2}).
\end{equation*}
\end{theorem}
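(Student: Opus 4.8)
The plan is to run the non-commutative Cotlar bootstrap of Mei--Ricard: convert the pointwise hypothesis \eqref{eq:Cotlar for symbol} into an operator identity in $\L G$, control the residual failure on $G_0$ by means of the invariance, and then iterate the identity along the dyadic scale $p = 2^k$. To set up the identity, fix $x = \sum_g x_g\lambda_g \in \mathbb{C}G$, put $y = T_m x$, and expand the four operators $y^* y$, $T_m(x^* y)$, $T_m(y^* x)$ and $T_m(T_m(x^* x))$ in the system $\{\lambda_k\}$ using only $\lambda_g\lambda_h = \lambda_{gh}$ and $\lambda_g^* = \lambda_{g^{-1}}$. A direct bookkeeping shows that the coefficient of $\lambda_k$ in
\[
 y^* y - T_m(x^* y) - T_m(y^* x) + T_m(T_m(x^* x))
\]
is a sum over the group of products of two differences of values of $m$, namely the Cotlar products appearing in \eqref{eq:Cotlar for symbol}. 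By hypothesis each such product vanishes as soon as its distinguished index leaves $G_0$, so the only surviving contributions are those whose active shift lies in $G_0$. This yields an exact identity
\[
 (T_m x)^*(T_m x) = T_m\!\big(x^*\,T_m x\big) + T_m\!\big((T_m x)^* x\big) - T_m\!\big(T_m(x^* x)\big) + \R(x),
\]
in which the remainder $\R(x)$ is built entirely from the $G_0$-block.

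The crux of the argument, and the step I expect to be the main obstacle, is to bound $\R(x)$ uniformly in $p$, and this is exactly where left $(G_0,\chi)$-invariance enters. For $g \in G_0$ the relation $m(gh) = \chi(g)m(h)$, together with its specializations $m(g) = \chi(g)m(e)$ and $m(g^{-1}) = \overline{\chi(g)}\,m(e)$, collapses the surviving products; after passing through the $\chi$-twisted conditional expectation $\mathsf{E}_{G_0}\colon \L G \to \L G_0$ one rewrites $\R(x)$ as a completely bounded expression of $x$, which I would check satisfies $\|\R(x)\|_p \lesssim \|x\|_{2p}^2$ with a constant independent of $p$. The delicate point is to perform this collapse so that the twist by $\chi$ is absorbed cleanly and does not disturb the positivity used below; this is precisely the mechanism by which the invariance balances the failure of the identity on $G_0$.

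Granting the identity and the control on $\R$, the bootstrap is standard. Write $c_q = \|T_m\colon L_q(\L G)\to L_q(\L G)\|$ and use $\|z\|_{2p}^2 = \|z^* z\|_p$ together with the non-commutative Hölder inequality $\|ab\|_p \le \|a\|_{2p}\|b\|_{2p}$ and $\|x^*\|_{2p} = \|x\|_{2p}$. Estimating each corrector by $c_p$ at level $p$ gives
\[
 \|T_m x\|_{2p}^2 \le 2\,c_p\,\|x\|_{2p}\,\|T_m x\|_{2p} + c_p^2\,\|x\|_{2p}^2 + \|\R(x)\|_p .
\]
Absorbing $\R$ (which affects only the implied constant, not the exponent) and solving the quadratic inequality $t^2 - 2 c_p t - c_p^2 \le 0$ in the ratio $t = \|T_m x\|_{2p}/\|x\|_{2p}$ yields the recursion $c_{2p} \le (1+\sqrt 2)\,c_p$. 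Since $T_m$ is diagonal in the Fourier basis, the base case is $c_2 = \|m\|_\infty$, normalized to $\|m\|_\infty \le 1$; iterating from $p = 2$ then gives $c_{2^k} \le (1+\sqrt 2)^{k-1}$, that is, $c_p \lesssim p^{\log_2(1+\sqrt 2)}$ along $p = 2^k$.

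It remains to pass to all exponents. Complex interpolation for non-commutative $L_p$-spaces between consecutive dyadic points promotes $c_p \lesssim p^{\beta}$ to every $p \ge 2$, with $\beta = \log_2(1+\sqrt 2)$. Duality then closes the range $1 < p \le 2$: the adjoint of $T_m$ on $L_p$ is $T_{\overline m}$ on $L_{p'}$, the symbol $\overline m$ again satisfies the hypotheses, whence $c_p = \|T_{\overline m}\|_{p'} \lesssim (p')^{\beta}$. Since $p^2/(p-1)$ is invariant under $p \leftrightarrow p'$ and dominates both $p$ and $p' = p/(p-1)$, the two estimates combine into the single self-dual bound $\|T_m\colon L_p\to L_p\| \lesssim (p^2/(p-1))^{\beta}$, as asserted.
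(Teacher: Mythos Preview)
The paper does not contain a proof of this statement: Theorem~\ref{th: Cotlar theorem of Adri and Runlian} is quoted verbatim from \cite[Theorem~A]{GonParXia2022} as a background result in Section~\ref{sec: background}, and the present paper only \emph{applies} it (in the proof of Theorem~\ref{th:main theorem}) after verifying its hypotheses for the specific symbol $m$ on $\Gamma_n$. So there is no ``paper's own proof'' to compare your attempt against.

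That said, your sketch is the right architecture and matches the strategy of the cited source: translate \eqref{eq:Cotlar for symbol} into the operator Cotlar identity, isolate the $G_0$-remainder, use the $(G_0,\chi)$-invariance and the conditional expectation onto $\L G_0$ to control it, then run the dyadic bootstrap $c_{2p}\le(1+\sqrt2)\,c_p$ and close by interpolation and duality. The one place where your outline is genuinely incomplete is the treatment of $\R(x)$: you assert $\|\R(x)\|_p\lesssim\|x\|_{2p}^2$ with a $p$-independent constant and then say you ``absorb'' it without affecting the exponent, but you do not actually carry out the collapse via the $\chi$-twisted expectation, nor verify that the resulting map is completely bounded with the right uniformity. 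In \cite{GonParXia2022} this is precisely the technical heart of the argument, and it requires more than a sentence. Everything downstream of that step in your write-up is correct.
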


The subgroup $G_0$ gives a range of flexibility to this result with respect to the original one of Cotlar: taking a big subgroup $G_0$ increases the chances for the formula to hold, but makes it harder for $m$ to satisfy the invariance hypothesis. 

\subsection*{Hyperbolic planes, their boundaries and M\"oebius transformations}
As we said, the group $\PSL_2(\mathbb{C})$ acts transitively on the set of pairs $(p,\Sigma)$ where $\Sigma$ is an hyperbolic plane embedded in $\mathbb{H}^3$ and $p$ is a point in $\Sigma$. When working with the upper-half space model $\mathbb{H}^3= \mathbb{C}+\mathbb{R}_{>0}j$, hyperbolic planes can be identified with half-planes and semispheres perpendicular (in the Euclidean sense) to $\mathbb{C}$. This induces a bijection between the set of hyperbolic planes in $\mathbb{H}^3$ and the set of generalized circles in $\widehat{\mathbb{C}} = \mathbb{C}\cup \{\infty\}$, that is, the set of lines and circles in $\mathbb{C}$.
We will denote by $\partial \Sigma$ the generalized circle associated to $\Sigma$ by this correspondence, and we will call it the boundary of $\Sigma$.

Notice also that, for any given hyperplane $\Sigma$ and any $g\in \PSL_2(\mathbb{C})$, it holds that $\partial(g\cdot \Sigma) = g\cdot \partial \Sigma$, where $g$ is acting by M\"obius transformation on the right-hand side. The action of Bianchi groups by M\"obius transformations have been extensively studied in \cite{Sta2017}. We will introduce now several results and concepts in that article, that we will make use of.

Let  $\extR = \mathbb{R}\cup \{\infty\}$ be the extended real line. The Bianchi group $\Gamma_n'$ acts on $\extR$ in a controlled way:
\begin{proposition}\cite[Proposition 4.4]{Sta2017}\label{prop: K-circles are tangent}
    If $n\neq 3$ and $g\in \Gamma_n'$, then $\extR$ and $g\cdot \extR$ may only intersect tangentially.
\end{proposition}

On the other hand, for a given $g\in \PGL_2(\mathbb{C})$ with $|\mathrm{det}(g)|=1$, the quantities:
\begin{equation}\label{eq: definition of curvature, cocurvature and curvature-center}
    \alpha = i(a\overline{d}-b\overline{c}) ,\quad \beta= -2\mathrm{Im}(c\overline{d}),\quad \beta'= -2\mathrm{Im}(a\overline{b}) ,\quad
    \text{where }
    g =
    \begin{bmatrix}
        a & b\\
        c & d
    \end{bmatrix},
\end{equation}
describe the image of $\extR$ by $g$ in the following way:
$$
g\cdot \extR = \left\{
X/Y\in \extC \colon \beta X\overline{X} - \alpha Y\overline{X} - \overline{\alpha}X\overline{Y} + \beta' Y\overline{Y} =0
\right\}.
$$
\begin{proposition}\cite[Propositions 3.5 and 3.7]{Sta2017}\label{prop: properties of centers}
    The coefficients $\alpha,$ $\beta$ and $\beta'$ defined as above verify that:
    \begin{enumerate}[label=\roman*.]
        \item\label{prop: properties of centers / equation for b,b',a} $\beta\beta' = |\alpha|^2 - 1$,
        \item the generalized circle $g\cdot \extR$ goes through $0$ if and only if $b'=0$,
        \item the generalized circle $g\cdot \extR$ is indeed a line if and only if $b =0$,
        \item if $b=0$, then $\alpha$ is a unit vector perpendicular to $g\cdot \extR$,
        \item if $b\neq 0$, then $g\cdot \extR$ is a circle of center $\alpha/\beta$ and radius $1/|\beta|$.
    \end{enumerate}
\end{proposition}

\section{Description of the set where Cotlar identity fails}\label{sec: classification of kernel}

Throughout the rest of the paper, we will denote by $\tau$ and $\tau'$ the following matrices:
\begin{equation*}
        \tau = \begin{bmatrix}
        i & 0 \\
        0 & 1
    \end{bmatrix}, \quad \text{and} \quad
        \tau' = \begin{bmatrix}
        0 & 1 \\
        -1 & 0
    \end{bmatrix}.
    \end{equation*}
Let $m$ be the function defined in \eqref{eq:def of m} and set $\Gamma_n = \PSL_2(\mathbb{Z}[\sqrt{-n}])$. As we shall prove later, our function $\restr{m}{\Gamma_n}$ is invariant (through a suitable character) with respect to: 
\begin{equation}\label{eq: def kernel of m}
    K_n = \{g\in \Gamma_n\colon m(g)=0\},
\end{equation}
which turns out to be a subgroup of $\Gamma_n$.
The goal of this section is to give an explicit description of this set. Along our proof, we will also give a description of the analogous set
\begin{equation}\label{eq: def kernel of m - Bianchi version}
    K_n' = \{g\in \Gamma_n' \colon m(g) = 0\}
\end{equation}
for $\Gamma_n'$ the Bianchi group of discriminant $-n$. These subsets $K'_n$ are defined only for square-free integers, and moreover $K'_n = K_n$ whenever $n \not\equiv -1 \; (\mathrm{mod}\, 4)$.

The main theorem of this section (namely, Theorem \ref{th: decomposition of kernel}) allows us to decompose $K_n$ and $K_n'$ as a combination of the four following disjoint sets:
\begin{equation}\label{eq: def of Kn+, etc}
\begin{split}
            K_n^+ &= \left\{
            \begin{bmatrix}
                x & y\sqrt{-n} \\
                z\sqrt{-n} & w
            \end{bmatrix}\colon x,y,z,w \in \mathbb{Z}, \; 
            xw + nyz = 1
            \right\},\\
        K_n^- &= \left\{
            \begin{bmatrix}
                x\sqrt{-n} & y \\
                z & w\sqrt{-n}
            \end{bmatrix}\colon x,y,z,w \in \mathbb{Z}, \; 
            nxw + yz = -1
            \right\},\\
        L_n^+ &= \left\{
            \begin{bmatrix}
                a & -\overline{a} \\
                c & \overline{c}
            \end{bmatrix}\colon a,c \in \O_{-n}, \; 
            \Re(a\overline{c})=\frac{1}{2}
            \right\},\quad \text{and}\\
        L_n^- &= \left\{
            \begin{bmatrix}
                a & \overline{a} \\
                c & -\overline{c}
            \end{bmatrix}\colon a,c \in \O_{-n}, \; 
            \Re(a\overline{c})=-\frac{1}{2}
            \right\}.    
\end{split}
\end{equation}

\begin{lemma}\label{lemma: for decompositions of kernels}
    Let $g\in \PSL_2(\mathbb{C})$, $G_0\leq \PSL_2(\mathbb{C})$ be the group defined in \eqref{eq: def of G0} and:
    \begin{equation*}
        L = \left\{
        \begin{bmatrix}
            a & -\overline{a} \\
            c & \overline{c}
        \end{bmatrix}\in \PSL_2(\mathbb{C}) \colon a,c\in \mathbb{C}, \, 2\mathrm{Re}(a\overline{c}) = 1
        \right\}.
    \end{equation*}
    Then it holds that:
    \begin{enumerate}[label=\roman*.]
        \item $g\cdot i\extR = i\extR$ if and only if $g\in G_0\cup \tau' G_0$,
        \item $g\cdot \mathbb{S}^1 = i\extR$ if and only if $g\in L\cup \tau' L$.
    \end{enumerate}
\end{lemma}
\begin{proof}
    Notice that $g\cdot i\extR = i\extR$ if and only if $\sigma(g)\cdot \extR = \extR$.
    It is well-known that:
    \begin{equation*}
        \{g\in \PSL_2(\mathbb{C})\colon g\cdot \extR =  \extR\} = \PSL_2(\mathbb{R}) \cup \begin{bmatrix}
        i & 0\\
        0 & -i
    \end{bmatrix}\PSL_2(\mathbb{R}).
    \end{equation*}
    The first point of the statement follows immediately.

    We claim now that any $g\in L$ verifies $g\cdot \mathbb{S}^1 = i\extR$. This is because $g^{-1}\cdot 0, \, g^{-1}\cdot \infty $ and $g^{-1}\cdot i$ can be very easily checked to be all complex numbers in $\mathbb{S}^1$. On the other hand, given any $g_0\in \PSL_2(\mathbb{C})$ such that $g_0\cdot \mathbb{S}^1 = i\extR$, the set of $g\in \PSL_2(\mathbb{C})$ satisfying $g\cdot \mathbb{S}^1 = i\extR$ decomposes as $G_0g_0\cup \tau' G_0g_0$. Since $G_0L \subset L$, the second point of the statement follows.
\end{proof}
\begin{ltheorem}\label{th: decomposition of kernel}
    Let $n\geq 1$ be an integer with $n\neq 3$, and $K_n$, $K_n'$ the sets defined in \eqref{eq: def kernel of m} and $\eqref{eq: def kernel of m - Bianchi version}$, respectively. Let also $\Sigma=\{\omega\in \mathbb{H}^3\colon \mathrm{Re}(\omega)=0\}$.
    Then, it holds that:
    \begin{enumerate}[label=\roman*.]
        \item $K_n$ is the stabilizer of $\Sigma$ in $\Gamma_n$.
            
        \item If $ n \equiv -1 \; (\mathrm{mod}\; 4)$ is a square free integer, then $K_n'$ is the union of the stabilizer of $\Sigma$ in $\Gamma_n'$, and the elements $g\in \Gamma_n'$ such that $ g\cdot \mathbb{S}^1 = i\extR$.
    \end{enumerate}
    Equivalently, $K_n= K_n^+ \cup K_n^-$ and $K_n' = K_n^+\cup K_n^-\cup L_n^+ \cup L_n^-$.
\end{ltheorem}

\begin{proof}
    Let $g$ be the matrix $$g=\begin{bmatrix}
            a & b \\
            c & d
        \end{bmatrix} \in \PSL_2(\mathbb{C}).$$ 
    Denote by $\sigma$ and $\sigma'$ the automorphisms of $\PSL_2(\mathbb{C})$ given by conjugation by $\tau$ and $\tau'$, respectively. Notice that $\sigma'(g^t)=g^{-1}$, so it follows that $g\cdot \Sigma = \Sigma$ if and only if $\sigma(g^t)\cdot \extR = \extR$, and $g\cdot \mathbb{S}^1=i\extR$ if and only if $\sigma(g^t)\cdot \extR = \mathbb{S}^1$. On the other hand, the quantities $\alpha$, $\beta$ and $\beta'$ defined in \eqref{eq: definition of curvature, cocurvature and curvature-center} for $\sigma(g^t)$ are the following:
    \begin{equation*}
        \alpha = i(a\overline{d} + c\overline{b}), \quad \beta = 2\mathrm{Re}(b\overline{d}), \quad \beta' = 2\mathrm{Re}(a\overline{c}).
    \end{equation*}
    It holds that $m(g)=0$ if and only if $\beta = \beta'$. Also, this implies that $|\alpha|^2 + |\beta|^2 = 1$. We consider now two cases:
    \begin{enumerate}[label=\roman*.]
        \item If $g\in \Gamma_n$, then $\beta,\beta'\in 2\mathbb{Z}$, so we conclude that $\beta = \beta' = 0$ and $|\alpha|=1$. Therefore, $\sigma(g^t)\cdot \extR$ is a line that goes through $0$ and has as orthogonal vector $\alpha$ (see Proposition \ref{prop: properties of centers}). Notice also that $\alpha\in i\mathbb{Z}[\sqrt{-n}]$. If $n>1$, then $\alpha\in \{-i,i\}$, so we get $\sigma(g^t)\cdot \extR = \extR$. If $n=1$, $\sigma(g^t)\in \Gamma_1$. By Proposition \ref{prop: K-circles are tangent} $,\sigma(g^t)\cdot \extR$ is tangent to $\extR$, so they must be the same line.

        \item If $g\in \Gamma_n'$ with $n \equiv -1 \; (\mathrm{mod}\, 4)$ and $n\neq 3$, then $\beta,\beta' \in \mathbb{Z}$ and $\alpha\in i\O_{-n}$. If $\beta = 0$, then $|\alpha|=1$ and therefore $\alpha\in \{i,-i\}$. This leads to $\sigma(g^t)\cdot \extR = \extR$ in the same fashion as before. On the other hand, if $|\beta|=1$ and $|\alpha|=0$, then $\sigma(g^t)\cdot \extR = \mathbb{S}^1$ by Proposition \ref{prop: properties of centers}.
    \end{enumerate}
    The rest of the statement follows from Lemma \ref{lemma: for decompositions of kernels}.
\end{proof}

\begin{remark}\label{remark: problem with Ln not being a group}
    Whereas $K_n$ is always a subgroup of $\Gamma_n$, $K_n'\subset \Gamma_n'$ is not, since it is not closed under products neither taking inverses. 
\end{remark}
\begin{remark}
    The theorem does not apply for $K_3'$. Notice that $\O_{-3}=\mathbb{Z}[\xi_3]$ where $\xi_3$ denotes a primitive $3$-root of the unit. A matrix as
    simple as:
    \begin{equation*}
        u=\begin{bmatrix}
        \xi_3 & 0\\
        0  & \overline{\xi_3}
    \end{bmatrix}
    \end{equation*}
    will be in $K_3'$ but not in $K_3\cup L_3$. Also, since $m$ is right $\PSU(2)$-equivariant, if we pick any $g\in K_3$ then $gu \in K_3'$, but this product will not be in $K_3\cup L_3$ in general.
\end{remark}

\section{Proof of the Cotlar Identity}

The sets $K_n^+$ and $K_n^-$ defined in \eqref{eq: def of Kn+, etc} verify certain relations related to the invariance of $m$:
$\tau' K_n^+ = K_n^+ \tau' = K_n^-.$
These identities, together with the fact that $K_n^+$ is a subgroup of $\Gamma_n$, implies easily that:
\begin{equation*}
     K_n^+K_n^-, \, K_n^-K_n^+ \subset K_n^- 
     \quad \text{and}\quad
     K_n^-K_n^- \subset K_n^+.
\end{equation*}
 We claim now that, because of these inclusions, the function $\chi\colon K_n\to \mathbb{T}^1$ defined as:
\begin{equation*}
\chi(g)=
\begin{cases}
1 & \text{if } g\in K_n^+,\\
-1 & \text{if } g\in K_n^-,
\end{cases}
\end{equation*}
is a character. The following three lemmas prove that $\restr{m}{\Gamma_n}$ is left $(K_n,\chi)$-equivariant.

\begin{lemma}\label{lemma: ANK decomposition}
    Let $g\in \mathrm{PSL}_2(\mathbb{C})$ and let $r_1(g)$ and $r_2(g)$ denote the first and second rows of $g$, respectively. There exist an unitary matrix $u\in \mathrm{PSU}(2)$ such that:
    \begin{equation*}
        g =
        \begin{bmatrix}
            s^{-1} & s^{-1}t \\
            0 & s
        \end{bmatrix}u,
    \end{equation*}
    with $s = |r_2(g)|$ and $t = \langle r_1(g), r_2(g) \rangle$, where the bracket represents the scalar product in $\mathbb{C}^2$.
\end{lemma}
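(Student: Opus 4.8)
The plan is to read this as an explicit Iwasawa-type ($AN\cdot K$) decomposition for $\mathrm{PSL}_2(\mathbb{C})$ with maximal compact $K=\mathrm{PSU}(2)$, produced by running Gram--Schmidt on the \emph{rows} of $g$. The point is that right multiplication by a unitary matrix acts on row vectors of $\mathbb{C}^2$ as a linear isometry (for the standard Hermitian form), so I want to choose $u$ rotating the second row of $g$ onto a positive multiple of $(0,1)$, and then simply read off what the first row is forced to be.

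First I would note that $\det g = 1$ forces $r_2(g)=(c,d)\neq 0$, so $s:=|r_2(g)|>0$ and $v:=s^{-1}r_2(g)$ is a unit vector. I then complete $v$ to an orthonormal basis in an orientation-compatible way: writing $v=(v_1,v_2)$, set $w:=(\overline{v_2},-\overline{v_1})$, so that $w\perp v$, $|w|=1$, and the matrix $u$ with first row $w$ and second row $v$ satisfies $\det u=|v_1|^2+|v_2|^2=1$; hence $u\in\mathrm{SU}(2)$, it represents an element of $\mathrm{PSU}(2)$, and $u^{-1}=u^{*}$.

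The computation is then short. Put $R:=gu^{-1}\in\mathrm{SL}_2(\mathbb{C})$. Since $vu^{-1}=vu^{*}=(\langle v,w\rangle,\langle v,v\rangle)=(0,1)$ and $r_2(g)=sv$, the second row of $R$ is $(0,s)$. Writing the first row of $R$ as $(\alpha,\beta)$, the identity $\det R=1$ gives $\alpha s=1$, i.e. $\alpha=s^{-1}$; and because right multiplication by the unitary $u^{-1}$ preserves the Hermitian inner product of rows, $\beta\overline{s}=\langle(\alpha,\beta),(0,s)\rangle=\langle r_1(g),r_2(g)\rangle=t$, so $\beta=s^{-1}t$ (using that $s$ is real). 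Therefore $R=\begin{bmatrix} s^{-1} & s^{-1}t \\ 0 & s\end{bmatrix}$ and $g=Ru$, the asserted decomposition; note that $R$ splits further as the product of the diagonal matrix with entries $s^{-1},s$ and the unipotent $\begin{bmatrix} 1 & t \\ 0 & 1\end{bmatrix}$, which is what the ``$AN$'' refers to.

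The argument is entirely routine; only two cosmetic points need care. One must normalize $u$ so that $\det u$ is exactly $1$ (not merely of modulus $1$), so that $R$ again lies in $\mathrm{SL}_2(\mathbb{C})$ and its first column comes out as $(s^{-1},0)$ with the correct sign rather than off by a unimodular scalar; and one must fix the convention that $\langle\cdot,\cdot\rangle$ is conjugate-linear in the second slot, which is precisely what makes the $(1,2)$-entry equal $t$ on the nose instead of $\overline{t}$. Beyond this bookkeeping there is no obstacle.
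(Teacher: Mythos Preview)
Your proof is correct and follows essentially the same route as the paper's own argument: choose $u\in\mathrm{PSU}(2)$ so that $r_2(g)u^*=(0,s)$, use $\det(gu^*)=1$ to read off the $(1,1)$-entry as $s^{-1}$, and then use unitary invariance of the Hermitian inner product of rows to identify the $(1,2)$-entry with $s^{-1}t$. The only difference is that you give an explicit formula for $u$ and flag the sign/conjugation conventions, whereas the paper simply asserts the existence of such a $u$ and proceeds.
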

\begin{proof}
    This is just an explicit statement of the $\mathrm{ANK}$ decomposition for $\PSL_2(\mathbb{C})$. It can be proven directly as follows. Let $u$ be the (only) unitary matrix such that $r_2(g)u^* = (0, s)$ with $s>0$. Thus, $s = |r_2(g)|$. On the other hand, using that $\det (gu^*)=1$, we get that $r_1(g)u^* = (s^{-1}, \omega)$ for some $\omega\in \mathbb{C}$. This $\omega$ can be computed using that $\omega = s^{-1}\langle r_1(gu^*), r_2(gu^*) \rangle = s^{-1}\langle r_1(g), r_2(g) \rangle$, which is the definition of $s^{-1}t$.
\end{proof}

\begin{lemma}\label{lemma: quadratic inequality}
    For any $g = 
    \begin{bmatrix}
        a & b \\
        c & d
    \end{bmatrix} \in \PSL_2(\mathbb{C})$, it holds that:
    $$ \Im(b\overline{c} - a\overline{d})^2 - 4 \Re(a\overline{c})\Re(b\overline{d}) \leq 1.
    $$
    Moreover, if $g\in \Gamma_n$, then the right-hand side of the above inequality can be improved to $0$.
\end{lemma}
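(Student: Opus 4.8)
The plan is to reduce the whole statement to one algebraic identity extracted from the $\mathrm{ANK}$ decomposition of Lemma \ref{lemma: ANK decomposition}. First a polarization step: with $X=\Re(a\overline c)$ and $Y=\Re(b\overline d)$ one has $4XY=(X+Y)^2-(X-Y)^2$, so the left-hand side of the lemma equals
\[
\Im(a\overline d - b\overline c)^2 + \Re(a\overline c - b\overline d)^2 - \Re(a\overline c + b\overline d)^2 .
\]
Hence it is enough to control the three real numbers $\Re(a\overline c+b\overline d)$, $\Re(a\overline c-b\overline d)$ and $\Im(a\overline d - b\overline c)$. Introducing the Pauli matrices $\sigma_2=\left[\begin{smallmatrix}0&-i\\ i&0\end{smallmatrix}\right]$ and $\sigma_3=\left[\begin{smallmatrix}1&0\\ 0&-1\end{smallmatrix}\right]$, these three quantities are exactly $r_1(g)\,M\,r_2(g)^{*}$ for $M=I,\ \sigma_3,\ i\sigma_2$, where $r_1(g),r_2(g)$ denote the rows of $g$.

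Next, write $g=\tilde g\,u$ with $\tilde g=\left[\begin{smallmatrix}s^{-1}& s^{-1}t\\ 0& s\end{smallmatrix}\right]$ and $u\in\PSU(2)$ as in Lemma \ref{lemma: ANK decomposition}; since $r_i(g)=r_i(\tilde g)u$ we get $r_1(g)M r_2(g)^{*}=r_1(\tilde g)\,(uMu^{*})\,r_2(\tilde g)^{*}$. For $M=I$ this gives $a\overline c+b\overline d=t$. For $M=\sigma_3$ and $M=i\sigma_2$ the matrices $\Sigma_1:=u\sigma_3u^{*}$ and $\Sigma_2:=u\sigma_2u^{*}$ are traceless Hermitian of determinant $-1$; writing $\Sigma_1=\left[\begin{smallmatrix}p&\zeta\\ \overline\zeta&-p\end{smallmatrix}\right]$ and $\Sigma_2=\left[\begin{smallmatrix}q&\eta\\ \overline\eta&-q\end{smallmatrix}\right]$, a direct computation with $r_1(\tilde g)=(s^{-1},s^{-1}t)$, $r_2(\tilde g)=(0,s)$ yields $a\overline c-b\overline d=\zeta-pt$ and $a\overline d - b\overline c=i(\eta-qt)$. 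Setting $t_1=\Re(t)$, $\zeta_1=\Re\zeta$, $\eta_1=\Re\eta$, the left-hand side of the lemma becomes the quadratic $(\eta_1-qt_1)^2+(\zeta_1-pt_1)^2-t_1^2$.

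The punchline is a short Euclidean computation. The vectors $v_1=(\zeta_1,\zeta_2,p)$ and $v_2=(\eta_1,\eta_2,q)$ in $\mathbb R^3$ are orthonormal: unit length is $\det\Sigma_i=-1$, and $v_1\cdot v_2=\Re(\zeta\overline\eta)+pq=\tfrac12\Tr(\Sigma_1\Sigma_2)=\tfrac12\Tr(\sigma_3\sigma_2)=0$. Putting $v_3=v_1\times v_2$, the triple $\{v_1,v_2,v_3\}$ is an orthonormal basis; expanding $e_1=(1,0,0)$ and $e_3=(0,0,1)$ in it gives $\zeta_1^2+\eta_1^2=1-(v_3)_1^2$, $p^2+q^2=1-(v_3)_3^2$ and $p\zeta_1+q\eta_1=-(v_3)_1(v_3)_3$. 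Substituting these into the quadratic above collapses it to $1-\big((v_3)_1-(v_3)_3\,t_1\big)^2\le 1$, which is the asserted bound for arbitrary $g\in\PSL_2(\mathbb C)$.

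For the refinement to $0$ when $g\in\Gamma_n$: since $\overline{\sqrt{-n}}=-\sqrt{-n}$, for $a,c\in\mathbb Z[\sqrt{-n}]$ one has $\Re(a\overline c)\in\mathbb Z$, hence $4\Re(a\overline c)\Re(b\overline d)\in4\mathbb Z$. Writing $a=a_1+a_2\sqrt{-n}$, etc., $\Im(b\overline c-a\overline d)=\big((b_2c_1-b_1c_2)-(a_2d_1-a_1d_2)\big)\sqrt n$; the identity $\Im(ad-bc)=0$ reads $a_1d_2+a_2d_1=b_1c_2+b_2c_1$, and substituting turns the parenthesis into $2(a_1d_2-b_1c_2)$, so $\Im(b\overline c-a\overline d)^2\in4\mathbb Z$. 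Thus the left-hand side of the lemma lies in $4\mathbb Z$ and is $\le 1$, forcing it to be $\le 0$. The routine part is the matrix bookkeeping in the $\mathrm{ANK}$ step; the one place needing care is the orthonormal-frame identity — verifying that $v_1,v_2$ are genuinely orthonormal (this is exactly where $\det\Sigma_i=-1$ and $\Tr(\sigma_2\sigma_3)=0$ are used) and that the three relations make the quadratic telescope to $1-(\,\cdot\,)^2$.
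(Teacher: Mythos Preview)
Your proof is correct and takes a genuinely different route from the paper's. The paper (quoting the computation from \cite[Lemma~5.3]{GonParXia2022}) identifies the left-hand side with the explicit quadratic $-4X(1+X)$ in the real variable $X=na_2d_2+b_1c_1$ (writing entries as $a=a_1+a_2\sqrt{-n}$, etc.); the bound $\le 1$ is then simply the maximum of this parabola, and for $g\in\Gamma_n$ the integrality of $X$ forces $X(1+X)\ge 0$. Your approach replaces that entry-by-entry identity by the geometric picture of $\SU(2)$ acting by rotations on traceless Hermitian $2\times2$ matrices: the $\mathrm{ANK}$ step pushes all the content into the unitary $u$, and the orthonormal-frame identities (which are exactly the Bloch-sphere relations $\Tr(\Sigma_i\Sigma_j)=2\delta_{ij}$) collapse the expression to $1-\big((v_3)_1-(v_3)_3\,t_1\big)^2$. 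The paper's route is shorter once one has the polynomial identity in hand, but it outsources the computation to an external reference; yours is self-contained and explains structurally why the sharp constant is $1$. Both arguments finish the $\Gamma_n$ refinement the same way, by observing that the left-hand side lies in $4\mathbb{Z}$. One cosmetic point: your sentence ``these three quantities are exactly $r_1(g)\,M\,r_2(g)^{*}$'' is slightly imprecise, since those are complex while the quantities you need are their real or imaginary parts; the meaning is clear from the subsequent lines, but you might want to phrase it as computing the three complex numbers $a\overline c\pm b\overline d$ and $a\overline d-b\overline c$ first and then extracting the relevant parts.
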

\begin{proof}
    Same computations as in the proof of \cite[Lemma 5.3]{GonParXia2022}
    shows that the left-hand side of the above expression can be written as $p(X)= -4X(1+X)$, where $X = na_2d_2+b_1c_1$. This proves the statement for $g\in \PSL_2(\mathbb{C})$. If $g\in \Gamma_n$, then $X$ is an integer and therefore $p(X)\in 4\mathbb{Z}$, which proves the second part of the statement. 
\end{proof}
\begin{lemma}\label{lemma: invariance of m}
    The symbol $\restr{m}{\Gamma_n}$ is right $K_n$-invariant and left $(K_n,\chi)$-equivariant.
\end{lemma}
\begin{proof}
    It is immediate to check that 
        $m(\omega g) = - m(g).$
    On the other hand, $K_n^+$ and $K_n^-$ are contained respectively in $G_0$ and $\omega G_0$, where $G_0$ is the group defined in the Introduction by \eqref{eq: def of G0}. Since $m$ is invariant by the left action of $G_0$,
    it follows that $\restr{m}{\Gamma_n}$ is left $(K_n,\chi)$-equivariant.

     For the right invariance, let's take $g\in \Gamma_n$ and $h\in K_n$. If $g\in K_n$, it is immediate that $m(gh)=m(g)=0$, so we rule out this case. Let's write $g$ and $h$ as
     \begin{equation*}
         g = \begin{bmatrix}
             a & b \\
             c & d
         \end{bmatrix}
         \quad \text{and} \quad
         h = \begin{bmatrix}
             s^{-1} & s^{-1}t\\
             0 & s
         \end{bmatrix}u,
     \end{equation*}
     where we used Lemma \ref{lemma: ANK decomposition} to decompose $h$ in a product of two matrices, such that $u\in \PSU(2)$, $s>0$ and $t = \langle r_1(h), r_2(h)\rangle$. Recall that $r_1$ and $r_2$ represent the first and second rows of our matrices, and $\langle \cdot, \cdot \rangle$ is the scalar product in $\mathbb{C}^2$. Since $h\in K_n$, $t$ is purely imaginary, which allows us to write:
     \begin{equation}\label{eq: matrix equation of quadratic ineq}
     \begin{split}
         \Re \langle r_1(gh), r_2(gh) \rangle 
         &= 
         \Re(a\overline{c})(1+(\Im t)^2)s^{-2}
         + \Re(b\overline{d})s^2
         + \Im(b\overline{c}-a\overline{d})\Im t \\
         &=
         \begin{bmatrix}
             s^{-1}\Im t & s
         \end{bmatrix}
         \begin{bmatrix}
             2 \Re (a\overline{c}) & \Im(b\overline{c}-a\overline{d}) \\
             \Im(b\overline{c}-a\overline{d}) & 2 \Re (b\overline{d})
         \end{bmatrix}
         \begin{bmatrix}
             s^{-1}\Im t \\ s
         \end{bmatrix} \\ & \qquad + s^{-2}\Re(a\overline{c})
     \end{split}
     \end{equation}
     The Lemma \ref{lemma: quadratic inequality} says that the determinant of the matrix in \eqref{eq: matrix equation of quadratic ineq} is always non-negative. Therefore, this matrix will be semidefinite positive if $\Re(a\overline{c})\geq 0$ and $\Re(b\overline{d})\geq 0$ and semidefinte negative otherwise. In both cases, it implies that $m(gh)m(g)\geq 0$. Since $gh\not \in K_n$, it follows that $m(gh)=m(g)$, proving the statement.
\end{proof}

In the proof of the Theorem \ref{th:main theorem}, we will make use of two inequalities that we introduce now as two independent lemmas.
\begin{lemma}\label{lemma: relation m(g)m(gt)}
    Let $g = 
    \begin{bmatrix}
        a & b \\
        c & d
    \end{bmatrix}\in \Gamma_n$. Then 
    \begin{equation*}
    m(g)m(g^t)\Re(a\overline{d}+b\overline{c})\geq 0,
    \end{equation*}
    where $g^t$ denotes the transpose of $g$.
\end{lemma}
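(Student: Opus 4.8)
The plan is to relate $m(g)$ and $m(g^t)$ explicitly through the quantities $\Re(a\overline c)$, $\Re(b\overline d)$, $\Re(a\overline b)$ and $\Re(c\overline d)$, exploiting the fact that transposition swaps the roles of columns and rows. Recall from \eqref{eq:def of m} that $m(g)=\operatorname{sign}(\Re(a\overline c+b\overline d))$, so $m(g)$ is governed by the inner product of the two \emph{columns} of $g$, while $m(g^t)=\operatorname{sign}(\Re(a\overline b+c\overline d))$ is governed by the inner product of the two \emph{rows}. The target inequality $m(g)m(g^t)\Re(a\overline d+b\overline c)\geq 0$ then ties these two sign patterns to a third real quantity coming from the ``anti-diagonal'' pairing. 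Since $g\in\Gamma_n$, all four of $\Re(a\overline c),\Re(b\overline d),\Re(a\overline b),\Re(c\overline d)$ are integers (indeed elements of $\mathbb{Z}$ when $n\not\equiv-1\bmod 4$, and the half-integer case would be handled by the same algebra), which is what turns soft inequalities into the sharp sign statement.

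First I would dispose of the degenerate cases: if $m(g)=0$ or $m(g^t)=0$ the inequality is trivial, so assume both are nonzero, i.e.\ $\Re(a\overline c+b\overline d)\neq 0$ and $\Re(a\overline b+c\overline d)\neq 0$. Next I would write out the determinant relation $ad-bc=1$ in the two useful ways used repeatedly in the earlier lemmas: multiplying by $\overline{bd}$ and taking real parts, and multiplying by $\overline{ac}$ and taking real parts, to extract linear relations among the six real pairings $\Re(a\overline b),\Re(a\overline c),\Re(a\overline d),\Re(b\overline c),\Re(b\overline d),\Re(c\overline d)$ together with the moduli $|a|^2,|b|^2,|c|^2,|d|^2$. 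Combining these with Lemma~\ref{lemma: lower bound of scalar product column by column} (which gives $\Re(a\overline c)\Re(b\overline d)\geq 0$ for $g\in\Gamma_n$) and Lemma~\ref{lemma: quadratic inequality} (which gives $\Im(b\overline c-a\overline d)^2\leq 4\Re(a\overline c)\Re(b\overline d)$ for $g\in\Gamma_n$), I expect to show that $\Re(a\overline c)$, $\Re(b\overline d)$, $\Re(a\overline b)$, $\Re(c\overline d)$ cannot have ``conflicting'' signs, and in fact that $\operatorname{sign}(\Re(a\overline c+b\overline d))$ and $\operatorname{sign}(\Re(a\overline b+c\overline d))$ force the sign of $\Re(a\overline d+b\overline c)$ via the identity $\Re(a\overline d+b\overline c) = \Re(a\overline d)+\Re(b\overline c)$ and the relation $\Re(a\overline d)-\Re(b\overline c)=\Re((ad-bc)\cdot{}\text{(something)})$ — more precisely, $ad+bc = (ad-bc)+2bc = 1+2bc$, so $\Re(a\overline d + b \overline c)$ should be controlled once one observes that $ad\overline{bc}$ or a similar product pins down the cross term.

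The concrete mechanism I would aim for: set $P=\Re(a\overline c)$, $Q=\Re(b\overline d)$, $R=\Re(a\overline b)$, $S=\Re(c\overline d)$, and show (using $ad-bc=1$ and the Cauchy--Schwarz-type bounds above) that the quantity $\Re(a\overline d+b\overline c)$ equals, up to a positive factor, something like $PS+QR$ or $P\,Q$-weighted combination, together with the observation that $P,Q$ have the same sign and $R,S$ have the same sign, and that moreover $\operatorname{sign}(P+Q)=\operatorname{sign}(PS+QR)/\operatorname{sign}(R+S)$ or an analogous bookkeeping identity. Once such an algebraic identity is in hand, multiplying through by $m(g)m(g^t)=\operatorname{sign}(P+Q)\operatorname{sign}(R+S)$ yields the claim. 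The main obstacle is precisely finding the right intermediate identity expressing $\Re(a\overline d+b\overline c)$ in terms of $P,Q,R,S$ and the moduli so that all sign information lines up; the integrality from $g\in\Gamma_n$ (via Lemmas~\ref{lemma: lower bound of scalar product column by column} and \ref{lemma: quadratic inequality}) should be exactly what rules out the borderline configurations where the product could be negative, so I would keep those two lemmas front and center throughout the sign analysis.
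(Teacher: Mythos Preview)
Your proposal is a plan, not a proof: you explicitly flag ``the main obstacle'' as locating the right algebraic identity and then never locate it, so what remains is only the trivial reduction to $m(g),m(g^t)\neq 0$ plus a hope. More seriously, the strategy of expressing $\Re(a\overline d+b\overline c)$ through the four ``abstract'' pairings $P,Q,R,S$ and the moduli, and then reading off the sign via Lemmas~\ref{lemma: lower bound of scalar product column by column} and~\ref{lemma: quadratic inequality}, is aimed in the wrong direction. Those two lemmas play no role in the paper's argument for this statement, and the inequality in question can actually fail for $g\in\PSL_2(\mathbb{C})$ with non-lattice entries, so no identity built purely from $P,Q,R,S$ and the moduli (quantities that make sense for arbitrary $g$) can deliver it. The integrality you invoke is of the wrong objects.

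The paper's proof is coordinate-based. After the same easy reduction to $m(g),m(g^t)\neq 0$, it uses the $\omega$-symmetry ($g\mapsto\omega g$, $g\mapsto g\omega$, $g\mapsto\omega g\omega$) to reduce further to the case where $\Re(a\overline c)\neq 0$ and $\Re(a\overline b)\neq 0$, so that the claim becomes
\[
\Re(a\overline c)\,\Re(a\overline b)\,\Re(a\overline d+b\overline c)\geq 0.
\]
Writing each entry as $x=x_1+x_2\sqrt{-n}$, this triple product is computed (quoting \cite[Proposition~5.8]{GonParXia2022}) to equal $p(X)=(AX+B)(2X+1)$ in the single variable $X=b_1c_1+na_2d_2$, with $A=n(a_1^2+a_2^2)$ and $B=na_2^2$. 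Both roots $-B/A$ and $-1/2$ lie in $(-1,0)$, so $p(X)\geq 0$ for every \emph{integer} $X$; and $X\in\mathbb{Z}$ precisely because $g\in\Gamma_n$. The decisive integral quantity is this coordinate expression $X$, not any of $P,Q,R,S$, and the passage to explicit real/imaginary-part coordinates is exactly the step your proposal is missing.
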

\begin{proof}
    If $g$ or $g^t$ are in $K_n$, the result is immediate. If they are not, we know $m(g)$ has the same sign as $\Re(a\overline{c})$ or $\Re(b\overline{d})$, depending on which one is non-zero. We'll suppose that both $\Re(a\overline{c})$ and $\Re(a\overline{b})$ are non-zero, since the rest of the cases comes from applying this one to $\tau' g$, $g\tau'$ or $\tau' g \tau'$.

Under this hypothesis, the statement is equivalent to
    \begin{equation*}
    \Re(a\overline{c})\Re(a\overline{d})\Re(a\overline{d}+b\overline{c})\geq 0.
    \end{equation*}
     From the proof of \cite[Proposition 5.8]{GonParXia2022} we know that the left-hand side of the inequality equals $p(X)=(AX+B)(2X+1)$ with $A = n(a_1^2+a_2^2)$, $B=na_2^2$ and $X = b_1c_1+na_2d_2$. Since $X$ is an integer and the roots of the polynomial $p$ have modulus lesser or equal than $1$, we conclude the statement.
\end{proof}

\begin{lemma}\label{lemma: lower bound of scalar product column by column}
    For any 
    $g = \begin{bmatrix}
        a & b \\
        c & d
    \end{bmatrix}\in \PSL_2(\mathbb{C})$, it holds that $\Re(a\overline{c})\Re (b \overline d)\geq -\frac{1}{4}$.
    Moreover, if $g\in \Gamma_n$, the right-hand side of the inequality can be improved to $0$.
\end{lemma}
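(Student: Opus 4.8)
The plan is a short direct computation relying only on the relation $ad-bc=1$. Since $\Re(a\overline c)\Re(b\overline d)$ is unchanged under $g\mapsto -g$, I may lift $g$ to a matrix in $\SL_2(\mathbb{C})$. First I would set $w:=bc$, so that $ad=1+w$, and expand
\begin{align*}
    4\,\Re(a\overline c)\,\Re(b\overline d)
    &=(a\overline c+\overline a\, c)(b\overline d+\overline b\, d)\\
    &= 2\Re(ab\,\overline{cd}) + 2\Re(ad\,\overline{bc}).
\end{align*}
The second term is exactly computable: $ad\,\overline{bc}=(1+w)\overline w=\overline w+|w|^2$, so it equals $2\Re(w)+2|w|^2$. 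For the first term I would use the crude bound $\Re(ab\,\overline{cd})\ge -|a|\,|b|\,|c|\,|d| = -|ad|\,|bc| = -|1+w|\,|w|$. Combining these,
\[
    4\,\Re(a\overline c)\,\Re(b\overline d)\;\ge\; 2|w|^2 + 2\Re(w) - 2|1+w|\,|w|,
\]
and it remains to check that the right-hand side is $\ge -1$ for every $w\in\mathbb{C}$.

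Writing $w=s+it$ with $s,t\in\mathbb{R}$, this reduces to the elementary inequality $2(s^2+t^2)+2s+1 \ge 2\sqrt{(s^2+t^2)\big((1+s)^2+t^2\big)}$. The left-hand side equals $2(s+\tfrac12)^2+2t^2+\tfrac12>0$, so squaring is legitimate, and after writing $P:=s^2+t^2$ the difference of the two squared sides collapses to $(2s+1)^2\ge 0$. This proves the first inequality, and it is sharp: $a=b=d=1/\sqrt2$, $c=-1/\sqrt2$ gives equality in $\Re(a\overline c)\Re(b\overline d)=-\tfrac14$.

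For the refinement when $g\in\Gamma_n$, write $a=a_1+a_2\sqrt{-n}$ and likewise for $b,c,d$, with all coefficients in $\mathbb{Z}$. Then $\Re(a\overline c)=a_1c_1+n\,a_2c_2\in\mathbb{Z}$ and similarly $\Re(b\overline d)\in\mathbb{Z}$, so their product is an integer; being $\ge-\tfrac14$ by the first part, it must be $\ge 0$. I expect the only slightly delicate point to be the bookkeeping in the expansion above: one has to group the four cross terms so that the relation $ad=1+w$ eliminates exactly one pair, which is precisely what makes the lossy modulus estimate on the other pair still strong enough to reach the bound $-1$.
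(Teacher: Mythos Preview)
Your proof is correct and fully rigorous. The argument, however, differs from the paper's. The paper assumes $\Re(a\overline c)\Re(b\overline d)<0$, multiplies the determinant relation $ad-bc=1$ by $\overline{cd}$, and takes real parts to obtain
\[
|\Re(a\overline c)|\,|d|^2+|\Re(b\overline d)|\,|c|^2=|\Re(c\overline d)|\le |c|\,|d|,
\]
then observes that an inequality $\alpha x^2+\beta y^2\le xy$ with $\alpha,\beta,x,y>0$ forces $\alpha\beta\le\tfrac14$ by looking at the discriminant of the associated quadratic in $x/y$. Your route instead expands $4\Re(a\overline c)\Re(b\overline d)$ directly in terms of the single complex variable $w=bc$, bounds one cross term by its modulus while computing the other exactly via $ad=1+w$, and then verifies the resulting one-variable inequality by squaring. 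The paper's argument is shorter and a touch more conceptual; yours is more computational but has the advantage of yielding the equality case explicitly and of not needing to set aside the degenerate case $c=0$ or $d=0$. Both proofs handle the $\Gamma_n$ refinement identically, via integrality of $\Re(a\overline c)$ and $\Re(b\overline d)$.
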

\begin{proof}
    Suppose that $\Re(a\overline{c})\Re(b\overline{d})<0$. Then, by multiplying the equation $ad-bc=1$ by $\overline{cd}$ and taking real part, we get that:
    \begin{equation*}
        |\Re(b\overline{d})||c|^2 + |\Re(a\overline{c})||d|^2 = |\Re(c\overline{d})| \leq |c||d|.
    \end{equation*}
    Now we claim that any positive numbers $x,y,\alpha,\beta >0$ satisfying 
    \begin{equation}\label{eq: reverse Cauchy with weights}
        \alpha x^2 + \beta y^2 \leq xy
    \end{equation}
    must verify $\alpha\beta\leq \frac{1}{4}$. To prove the claim, just notice that \eqref{eq: reverse Cauchy with weights} is equivalent to $\alpha u^2 - u + \beta \leq 0$ with $u=x/y$, and this can only happen if the discriminant $1-4 \alpha \beta$ is greater than or equal to $0$.

    If $g\in \Gamma_n$, then both $\Re(a\overline{c})$ and $\Re(b\overline{d})$ are integers, so $\Re(a\overline{c})\Re (b \overline d)$ must be indeed non-negative.
\end{proof}

\begin{proof}[Proof of Theorem \ref{th:main theorem}]
We are going to prove that the symbol $\restr{m}{\Gamma_n}$ satisfies \eqref{eq:Cotlar for symbol} relative to $K_n$, that is:
\begin{equation*}
    (m(g^{-1}) - m(h)) (m(gh) - m(g)) = 0,
\text{ for all } g \in \Gamma_n \setminus K_n \text{ and } h \in \Gamma_n.
\end{equation*}
If $h\in K_n$, the equality follows from the right $K_n$-invariance of $m$ proven in Lemma \ref{lemma: invariance of m}. Now, suppose that $h\not\in K_n$ and $m(g^{-1})\neq m(h)$. We have to prove that $m(gh)=m(g)$. Since the hypothesis $m(g^{-1})\neq m(h)$ implies that $gh\not\in K_n$, it suffies to prove that $m(gh)m(g)\geq 0$. We write:
\begin{equation*}
         g = \begin{bmatrix}
             a & b \\
             c & d
         \end{bmatrix}
         \quad \text{and} \quad
         h = \begin{bmatrix}
             s^{-1} & s^{-1}t\\
             0 & s
         \end{bmatrix}u,
     \end{equation*}
using Lemma \ref{lemma: ANK decomposition} to decompose $h$ into an upper-triangular matrix and an unitary one. Now, a computation shows that:
\begin{equation*}
\begin{split}
        m(gh)m(g)
        &=
        \operatorname{sign}\Big( \Re(a\overline{c}+b\overline{d})\Re(a\overline{c})s^{-2}(1+(\Re t)^2)\\
        &\qquad +
         \Re(a \overline{c}+b\overline{d})\Re(a\overline{d}+b\overline{c})\Re t\\
        &\qquad +
        \Re(a\overline{c}+b\overline{d})\big[\Re(a\overline{c})s^{-2}(\Im t)^2 
            + \Re(b\overline{d})s^{2}
            + \Im(b\overline{c} - a\overline{d})\Im t\big] \Big) \\
        &=
        \operatorname{sign}\Big((\mathrm{I}) + (\mathrm{II}) +(\mathrm{III})\Big)
\end{split}
\end{equation*}
Now notice that $(\mathrm{I})$ is non-negative because of Lemma \ref{lemma: lower bound of scalar product column by column} and the fact that $s>0$. Also, $(\mathrm{II})$ is non-negative because $\Re t$ has the same sign as $m(h) = -m(g^{-1}) = m(g^t)$, so we can apply Lemma \ref{lemma: relation m(g)m(gt)}. Finally, $(\mathrm{III})$ is non-negative because of Lemma \ref{lemma: quadratic inequality}, which implies that each factor of the product has the same sign as $m(g)$ or is zero.
\end{proof}

\begin{remark}
    We still don't know if the Fourier multiplier given by $\restr{m}{\Gamma_n'}$ is bounded or not in $L_p(\L\, \Gamma_n')$, but what can be proven is that this symbol do not verify a Cotlar identity as in Theorem \ref{th: Cotlar theorem of Adri and Runlian} with respect to any possible subgroup of $\Gamma_n'$. To see this, suppose that $\restr{m}{\Gamma_n'}$ is $(G_0,\chi)$-equivariant for some subgroup $G_0\leq \Gamma_n'$ and some character $\chi$ on $G_0$. We claim that $G_0\cap L_n^+= \varnothing$. 
    Firstly, notice that for any $l\in L_n^+$ and $g\in \PSL_2(\mathbb{C})$, it holds that:
    \begin{equation*}
        m(lg) = \operatorname{sign} \left(|r_1(g)|-|r_2(g)|\right)
    \end{equation*}
    where $r_1(g)$ and $r_2(g)$ denotes the first and second rows of $g$ as complex vectors in $\mathbb{C}^2$.
    Let $a\colon \Gamma_n'\to \Gamma_n'$ be the map that permutes the two rows of a matrix and multiplies the first column by $-1$.
    If $G_0\cap L_n^+\neq \varnothing$, by the formula above it would hold that for any $h\in \Gamma_n'$ and any $l\in G_0\cap L_n^+$:
    \begin{equation*}
        m(lh) = \chi(l)m(h) = \chi(l)m( a(h)) = m(l a(h) ) = - m(lh),
    \end{equation*}
    which is of course impossible. On the other hand, fix an $l\in L^+_n$ whose inverse is not in $K_n'$. Then in order to Cotlar identity to hold, one needs that:
    \begin{equation*}\label{eq: Cotlar in L+ must fail}
        m(l^{-1}) = m(h), \quad
        \text{for any } h\in \Gamma_n' \text{ such that } m(lh) \neq 0.
    \end{equation*}
    Pick $h\in \Gamma_n'$ any element which verifies this equation. Let $h'$ be given by $h' = \tau' h \tau'$, where $\tau'$ is the matrix defined at the beginning of Section \ref{sec: classification of kernel}. Notice that $m(lh') = - m(lh)\neq 0$, but $m(h') = - m(h)$. Therefore Cotlar identity must fail when applied to $l$ and $h'$.
\end{remark}

\section{Addenda: another \texorpdfstring{$L_p$}{Lp}-bounded multiplier on Bianchi groups}

Our initial choice of hyperbolic plane $\Sigma = \{\omega \in \mathbb{H}^3\colon \mathrm{Re}(\omega) = 0\}$ was motivated by the authors of \cite{GonParXia2022} proving that the corresponding multiplier $m$ satisfies the Cotlar identity in $\mathrm{SL}_2(\mathbb{Z})$ and $\mathrm{SL}_2(\mathbb{Z}[i])$. However, we have seen that such an identity fails for $m$ when restricted to a general Bianchi group $\Gamma_n'$. This failure is connected to the geometry of circles in the orbit of $\partial \Sigma$ under the action of $\Gamma_n'$, so 
it is natural to ask for multipliers induced by planes with a better-behaved boundary.
Concretely, we are going to consider now the multiplier $\widetilde m$ induced by the hyperplane $\widetilde \Sigma = \{\omega \in \mathbb{H}^3\colon \mathrm{Im}(\omega) = 0\}$, and the partition $\widetilde \Sigma^+ = \{\omega \in \mathbb{H}^3\colon \mathrm{Im}(\omega) > 0\}$, $\widetilde \Sigma^- = \{\omega \in \mathbb{H}^3\colon \mathrm{Im}(\omega) < 0\}$.

Indeed, we claim that  $m$ will satisfy the Cotlar identity on $\mathrm{SL}_2(\mathbb{Z}[i])$ if and only if $\widetilde m$ does so. Let $\sigma$ be the automorphism of $\PSL_2(\mathbb{C})$ given by conjugation by $\tau$, where $\tau$ is the matrix defined at the beginning of Section \ref{sec: classification of kernel}.
A simple computation shows that:
\begin{align*}
    \widetilde m(\sigma(g)) &= \mathrm{sign} \, \mathrm{Im}\left[ 
    (aj+bi)\overline{(-cij + d)}
    \right] \\
    &= \mathrm{sign} \, \mathrm{Re}\left( 
    a\overline{c} + b\overline{d}
    \right) = m(g).
\end{align*}

The automorphism $\sigma$ leaves $\mathrm{SL}_2(\mathbb{Z}[i])$ invariant, so $\sigma$ restricts to an automorphism of this group, proving our claim. Therefore, this point of view also generalizes the results of \cite{GonParXia2022} in a different way than we did before.

\begin{lemma}\label{lemma: kernel for the second multiplier}
    For any $n\geq 1$ with $n\neq 3$, the set $\widetilde{K}_n = \{g\in \Gamma_n'\colon \widetilde{m} (g) = 0\}$ coincides with the stabilizer of $\widetilde \Sigma$ under the action of $\Gamma_n'$. Indeed, $\widetilde m (g)$ can be written as:
    \begin{equation*}
        \widetilde m (g) =
        \begin{cases}
            1 & \text{if } g\cdot \widehat{\mathbb{R}} \text{ lies on the upper half-plane},\\
            -1 & \text{if } g\cdot \widehat{\mathbb{R}} \text{ lies on the lower half-plane},\\
            0 & \text{otherwise}.
        \end{cases}
    \end{equation*}
\end{lemma}
\begin{proof}
    Notice first that two hyperbolic planes $\Sigma_1$ and $\Sigma_2$ verify $\Sigma_1\cap \Sigma_2 \neq \emptyset$  if and only if $\partial \Sigma_1$ and $\partial\Sigma_2$ intersect in at least two points.
    If $g\in \Gamma_n'$ and $\widetilde{m}(g)=0$, it is because $g\cdot \widetilde \Sigma \cap \widetilde \Sigma \neq \emptyset$. Since $\partial \widetilde \Sigma = \widehat{\mathbb{R}}$, this implies that $g\cdot \widehat{\mathbb{R}}$ and $\widehat{\mathbb{R}}$ intersect in at least two points, but because of Proposition \ref{prop: K-circles are tangent} it means that they are the same generalized circle. Therefore $g\cdot \widehat{\mathbb{R}} = \widehat{\mathbb{R}}$ and $g\cdot \widetilde \Sigma = \widetilde \Sigma$. The rest of the statement follows similarly.
\end{proof}

\begin{lemma}\label{lemma: invariance for the second multiplier}
    The multiplier $\widetilde m$ restricted to $\Gamma_n'$ is right $\widetilde K_n$-invariant and left $(\widetilde K_n, \chi)$-equivariant, for some character $\chi$ on $\widetilde K_n$.
\end{lemma}
\begin{proof}
    Both the invariance and the equivariance follow easily from Lemma \ref{lemma: kernel for the second multiplier}. just taking the character $\chi$ defined as:
    \begin{equation*}
        \chi(g)=
        \begin{cases}
            1 & \text{if } g \text{ send the upper half-plane to itself},\\
            -1 & \text{otherwise}.
        \end{cases}
    \end{equation*}
\end{proof}

\begin{theorem}
    The symbol $\widetilde m$ satisfies the Cotlar identity on every Bianchi group $\Gamma_n'$ with $n\neq 3$. 
\end{theorem}
\begin{proof}
    Because of Lemma \ref{lemma: invariance for the second multiplier}, it is enough to prove that
    \begin{equation*}
    (\widetilde m(g^{-1}) - \widetilde m(h)) (\widetilde m(gh) - \widetilde m(g)) = 0,
\text{ for all } g, h \in \Gamma_n' \smallsetminus \widetilde K_n.
\end{equation*}
    Let's suppose that $\widetilde m(g^{-1})\neq \widetilde m(h)$. Then, without lost of generality, we can suppose that there is a generalized circle $C_g$ in the upper half-plane and a generalized circle $C_h$ in the lower half-plane such that $g\cdot C_g = \widehat{\mathbb{R}}$ and $h\cdot \widehat{\mathbb{R}} = C_h$. Since $\widehat{\mathbb{R}}$ and $C_h$ lie both on the exterior of $C_g$, they are mapped into the same half-plane by $g$. That is, $g\cdot C_h = gh\cdot \widehat{\mathbb{R}}$ and $g\cdot\widehat{\mathbb{R}}$ lie on the same side of $\widehat{\mathbb{C}}\smallsetminus\widehat{\mathbb{R}}$, and therefore $\widetilde m (gh) = \widetilde m(g)$. 
\end{proof}
\subsection*{Acknowledgments} The author was partially supported by pre-doctoral scholarship PRE2020-093245, 
    the Severo Ochoa Grant CEX2023-001347-5(MICIU), \linebreak 
    PIE2023-50E106(CSIC), 
    and PID2022-141354NB-I00(MICINN). He would also like to thank his advisor, Adrián González-Pérez, for pointing him out the problem, as well as for his support and insightful conversations.

\end{document}